\newtheorem{thm}{Theorem}[section]
\newtheorem*{thm*}{Theorem}
\newtheorem{lemma}[thm]{Lemma}
\theoremstyle{definition} \newtheorem{definition}[thm]{Definition}
\newtheorem*{definition*}{Definition}
\theoremstyle{definition} \newtheorem{example}[thm]{Example}
\newtheorem*{lemma*}{Lemma}
\newtheorem*{corollary*}{Corollary}
\theoremstyle{definition} \newtheorem{remark}[thm]{Remark}
\newtheorem*{conjecture*}{Conjecture}
\newcommand{\RR}{\mathbb{R}}
\newcommand{\NN}{\mathbb{N}}
\newcommand{\pair}{\mathcal{O}}
\newcommand{\comp}{\operatorname{comp}}
\newcommand{\smallqbinom}[2]{\left[\begin{smallmatrix} #1 \\ #2 \end{smallmatrix}\right]}
\newcommand{\qbinom}[2]{\begin{bmatrix} #1 \\ #2 \end{bmatrix}}
\newcommand{\maxcoeff}[1]{M\!\left(#1\right)}
\newcommand{\set}{S}
\newcommand{\one}{\mathbf{1}}
\DeclareMathOperator{\word}{word}
\begin{document}

\author{Brendan Pawlowski}
\title{The fraction of an $S_n$-orbit on a hyperplane}

\begin{abstract}
    Huang, McKinnon, and Satriano conjectured that if $v \in \RR^n$ has distinct coordinates and $n \geq 3$, then a hyperplane through the origin other than $\sum_i x_i = 0$ contains at most $2\lfloor n/2 \rfloor (n-2)!$ of the vectors obtained by permuting the coordinates of $v$. We prove this conjecture.
\end{abstract}  

\maketitle

\section{Introduction}

A permutation $\sigma \in S_n$ acts on $v \in \RR^n$ by $\sigma v = (v_{\sigma^{-1}(1)}, \ldots, v_{\sigma^{-1}(n)})$. Define $\pair(v,w) = \#\{\sigma \in S_n : w \cdot \sigma v = 0\}$, where $\cdot$ is the usual dot product on $\RR^n$. For example, if $\one = (1,1,\ldots,1) \in \RR^n$ is the all-ones vector, then $\pair(v,1)$ is $n!$ or $0$ depending on whether $v \cdot \one$ is zero or not. Putting this degenerate case aside, the goal of this paper is to prove the following theorem.
\begin{thm} \label{thm:main} For $n \geq 3$, the maximum of $\pair(v,w)$ over all $v, w \in \RR^n$ such that $v$ has distinct coordinates and $v \cdot \one \neq 0$ is 
    \begin{equation*}
        \begin{cases} 
            (n-1)! & \text{for $n$ odd}\\
            n(n-2)! & \text{for $n$ even}
         \end{cases} = 2 \lfloor \tfrac{n}{2} \rfloor (n-2)!,
        \end{equation*}
\end{thm}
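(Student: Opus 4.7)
The plan has three pieces: reduce via the stabilizer of $w$ to a count of ordered set partitions, bound the count for two-valued $w$ using a Stanley--Proctor-type theorem, and handle the case of more distinct values.

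\emph{Step 1 (stabilizer reduction).} For a transposition $\tau = (a,b)$ acting on the left,
\[
 w \cdot \tau \sigma v - w \cdot \sigma v = (w_a - w_b)(v_{\sigma^{-1}(b)} - v_{\sigma^{-1}(a)}),
\]
so the simultaneous vanishing of $w \cdot \sigma v$ and $w \cdot \tau\sigma v$ forces $w_a = w_b$ (since $v$ has distinct coordinates). Hence the zero set of $\sigma \mapsto w \cdot \sigma v$ is a union of left cosets of the Young subgroup $W \le S_n$ stabilizing $w$. Writing the distinct values of $w$ as $u_1, \ldots, u_k$ with multiplicities $\lambda_1, \ldots, \lambda_k$, these cosets are indexed by ordered set partitions $(S_1, \ldots, S_k)$ of $[n]$ with $|S_j| = \lambda_j$, and a coset is bad iff $\sum_j u_j V_j = 0$, where $V_j = \sum_{i \in S_j} v_i$; also $\sum_j V_j = v \cdot \one \neq 0$. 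Thus $\pair(v,w) = \prod_j \lambda_j! \cdot N$, with $N$ the number of bad ordered set partitions.

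\emph{Step 2 (two distinct values).} When $k = 2$, the two linear relations on $(V_1, V_2)$ uniquely determine $V_1$, and $N$ becomes the number of $\lambda_1$-subsets $S \subseteq [n]$ with $\sum_{i \in S} v_i$ equal to a specific value. The classical theorem of Stanley (via hard Lefschetz on the Grassmannian; equivalently, Proctor's $\mathfrak{sl}_2$-action on an exterior algebra) bounds such counts by $\maxcoeff{\qbinom{n}{\lambda_1}_q}$, the maximum coefficient of the Gaussian binomial. A short calculation gives $\maxcoeff{\qbinom{n}{2}_q} = \lfloor n/2 \rfloor$, and more generally shows that $\lambda_1!(n-\lambda_1)! \maxcoeff{\qbinom{n}{\lambda_1}_q} \leq 2\lfloor n/2 \rfloor(n-2)!$ with equality at $\lambda_1 = 2$. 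Sharpness is witnessed by $v = (1, 2, \ldots, n)$ and a two-block $w$ chosen so that $V_1$ equals the most popular pair-sum in $v$.

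\emph{Step 3 (more distinct values).} The main obstacle is $k \ge 3$. One route is to exploit upper-semicontinuity of $\pair(v, \cdot)$ in $w$ (each $\{w : w \cdot \sigma v = 0\}$ being closed): a maximizer $w^*$ sits in an affine flat $F \subseteq \RR^n$ on which $\pair$ is constant, and provided $F$ meets one of the walls $\{w_i = w_j\}$, one obtains a new maximizer with strictly fewer distinct values and can iterate down to the case of step 2. An alternative, more combinatorial approach is induction on $n$: fix the block $S_k$ of the last value of $w$, apply the bound to the restriction of $v$ to $[n] \setminus S_k$ (where $w$ now has $k - 1$ distinct values), and sum over choices of $S_k$; the final arithmetic would use the symmetry and unimodality of $\maxcoeff{\qbinom{n}{k}_q}$. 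This step is where I expect the bulk of the technical work to lie.
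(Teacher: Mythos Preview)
Steps 1 and 2 track the paper's approach: reduce modulo the stabilizer of $w$ and invoke Stanley's Sperner theorem to bound the number of bad cosets by the central coefficient of a Gaussian binomial. (Minor quibble: the implication you actually prove in Step~1---that simultaneous vanishing at $\sigma$ and $\tau\sigma$ forces $w_a=w_b$---is not what yields the coset decomposition; the trivial converse does.) Two points, however, are genuine gaps.

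First, the clause ``and more generally shows that $\lambda_1!(n-\lambda_1)!\,\maxcoeff{\smallqbinom{n}{\lambda_1}} \le 2\lfloor n/2\rfloor(n-2)!$'' is not a short calculation. Rewritten, it asks for $\maxcoeff{\smallqbinom{n}{k}} \le \frac{1}{n}\binom{n}{k}$ when $2 < k < n-2$, and this is the paper's main technical input. The argument there occupies several lemmas, combining Sagan's congruences for $\smallqbinom{n}{k}$ modulo $q^n-1$, analytic estimates on binomial coefficients, and computer verification for $n$ up to several hundred. You have correctly located the crux but have not addressed it.

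Second, neither route in Step~3 works as stated. In~(a), the flat $F = \{w' : w'\cdot\sigma v = 0\ \text{for all }\sigma\in Z\}$ is a linear subspace through the origin; to iterate you need a point of $F\setminus\RR\one$ with strictly fewer than $k$ distinct coordinate values, but when $\dim F = 1$ (equivalently, $\{\sigma v : \sigma\in Z\}$ spans a hyperplane) one has $F = \RR w$ and no such point exists---and nothing rules this out at a maximizer. Proposal~(b) is too schematic to evaluate. The paper sidesteps this entirely: it applies Stanley's Sperner theorem to $S_n/S_\alpha$ for the \emph{full} composition $\alpha = \comp(w)$, obtaining $\pair(v,w) \le \alpha!\,\maxcoeff{\smallqbinom{n}{\alpha}}$ for arbitrary $\ell(\alpha)$, and then a two-line refinement lemma (if $\beta$ refines $\alpha$ then $\beta!\,\maxcoeff{\smallqbinom{n}{\beta}} \le \alpha!\,\maxcoeff{\smallqbinom{n}{\alpha}}$, since $M(fg)\le f(1)M(g)$ for polynomials with nonnegative coefficients) reduces immediately to $\ell(\alpha)=2$.
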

This was conjectured by Huang, McKinnon, and Satriano, who proved the conjecture in some cases and gave explicit vectors achieving the conjectured bound \cite{Sn-orbit-hyperplane}. The problem is therefore to show that $\pair(v,w) \leq 2\lfloor n/2 \rfloor (n-2)!$ for all appropriate $v$ and $w$. (We do not know what happens if $v$ is allowed to have repeated coordinates.)

Here is an outline of our argument. We observe that if $v$ has distinct coordinates, then $\{\sigma \in S_n : w \cdot \sigma v = 0\}$ is an antichain in a certain weakening of Bruhat order on $S_n$, isomorphic to a disjoint union of copies of Bruhat order on $S_n/S_{\alpha}$ for some parabolic subgroup $S_{\alpha}$. The latter poset is known to have the Sperner property, so that $\#\{\sigma \in S_n : w \cdot \sigma v = 0\}$ is bounded in terms of the largest rank in $S_n/S_{\alpha}$. These ranks are the coefficients of the $q$-multinomial coefficient $\smallqbinom{n}{\alpha}$, and Theorem~\ref{thm:main} will follow from an appropriate bound on those coefficients. We note that this argument is similar in outline to arguments of Stanley \cite[\S 4.1.3]{stanley-algebra-to-combinatorics} and of Lindstr\"om \cite{lindstrom-sperner} resolving problems in extremal combinatorics using the Sperner property. 

\section{Proof of Theorem~\ref{thm:main}}

A \emph{composition} of a nonnegative integer $n$ is a sequence $\alpha = (\alpha_1, \ldots, \alpha_m)$ of positive integers with $|\alpha| := \sum_i \alpha_i = n$. Write $\alpha \vDash n$ to indicate that $\alpha$ is a composition of $n$, and let $\ell(\alpha)$ be the length of $\alpha$. Let $\set(\alpha)$ denote the set $\{\alpha_1 + \cdots + \alpha_i : 1 \leq i < \ell(\alpha)\}$. For example, $\set((1,3,1,2)) = \{1,4,5\}$. 

An \emph{ordered set partition of type $\alpha$} is a sequence $B_\bullet = (B_1, \ldots, B_{\ell(\alpha)})$ of nonempty sets with $|B_i| = \alpha_i$ whose disjoint union is $\{1,2,\ldots,|\alpha|\}$. For example, $(\{5\},\{1,4,6\},\{3\},\{2,7\})$ is an ordered set partition of type $(1,3,1,2)$. Let $S_n$ act on subsets of $\{1,2,\ldots,n\}$ in the obvious way, and hence on ordered set partitions of type $\alpha$.

If $\set(\alpha) = \{i_1 < \cdots < i_{m-1}\}$ and we set $i_0 = 0$ and $i_m = n$, the sequence of sets $\{i_j+1, i_j+2, \ldots, i_{j+1}\}$ for $j = 0,1,\ldots,m-1$ is an ordered set partition of type $\alpha$. Let $S_{\alpha}$ be the subgroup of $S_n$ preserving each of these sets. For example, $S_{(1,3,1,2)}$ is the subgroup of permutations in $S_7$ mapping each set in $(\{1\},\{2,3,4\},\{5\},\{6,7\})$ to itself. We abbreviate the order of $S_{\alpha}$ as $\alpha! := \prod_i \alpha_i!$. The $S_n$-action on ordered set partitions of type $\alpha$ is transitive and $S_{\alpha}$ is the stabilizer of a point, so we may identify $S_n/S_{\alpha}$ with the set of all ordered set partitions of type $\alpha$.

Let $t_{ij} \in S_n$ interchange $i$ and $j$ and fix all other elements of $\{1,2,\ldots,n\}$.
\begin{definition} The \emph{Bruhat order} on $S_n/S_{\alpha}$ is the transitive closure of the relations $(B_1, \ldots, B_m) < t_{ij}(B_1, \ldots, B_m)$ where $i \in B_a$ and $j \in B_b$ with $a < b$ and $i < j$. \end{definition}
    In other words, $B_\bullet \leq B_\bullet'$ if $B_\bullet$ is connected to $B_\bullet'$ by a sequence of relations as in the definition.

\begin{example} \label{ex:S3-mod-S21}
    The Bruhat order on $S_3/S_{(2,1)}$ is the chain
    \begin{equation*}
        \{\{1,2\},\{3\}\} < \{\{1,3\},\{2\}\} < \{\{2,3\},\{1\}\}.
    \end{equation*}
\end{example}

\begin{example}
     Bruhat order on $S_n/S_{(1,1,\ldots,1)}$ is just the usual Bruhat order on $S_n$.
\end{example} 

\begin{definition}
    Given a totally ordered set $B = \{i_1 < \cdots < i_m\}$, let $\word(B)$ be the word $i_1 \cdots i_m$. If $B_\bullet = (B_1, \ldots, B_m)$ is a sequence of such sets, let $\word(B_\bullet)$ be the concatenation $\word(B_1)\cdots \word(B_m)$.
\end{definition}
For example, $\word(\{5\},\{1,4,6\},\{3\},\{2,7\}) = 5146327$. Identifying ordered set partitions of type $\alpha$ with cosets in $S_n/S_{\alpha}$, the map $B_\bullet \mapsto \word(B_\bullet)$ chooses a distinguished representative from each coset.

\begin{remark}  The map $B_\bullet \mapsto \word(B_\bullet)$ sends ordered set partitions of type $\alpha$ to permutations $\sigma$  for which $\sigma_i > \sigma_{i+1}$ implies $i \in \set(\alpha)$, recovering a more common definition of $S_n/S_{\alpha}$:  Bruhat order on $S_n$ restricted to the set of such permutations. \end{remark}

\begin{definition} Define a poset structure on $(S_n/S_{\alpha}) \times S_{\alpha}$ by the relation $(B_\bullet,\sigma) \leq (B'_\bullet,\sigma')$ if and only if $B_\bullet \leq B'_\bullet$ in Bruhat order. The \emph{$\alpha$-Bruhat order}  $<_\alpha$ on $S_n$ is the image of the partial order $(S_n/S_{\alpha}) \times S_{\alpha}$ under the bijection $(S_n/S_{\alpha}) \times S_{\alpha} \to S_n$, $(B_\bullet, \sigma) \mapsto \word(B_\bullet)\sigma$. \end{definition}
Note that $\alpha$-Bruhat order is isomorphic to the disjoint union of $\alpha!$ copies of $S_n/S_{\alpha}$.

\begin{example}
    Comparing with Example~\ref{ex:S3-mod-S21}, the $(2,1)$-Bruhat order on $S_3$ is the disjoint union of the two chains 
    \begin{equation*}
        123 <_{(2,1)} 132 <_{(2,1)} 231 \qquad \text{and} \qquad 213 <_{(2,1)} 312 <_{(2,1)} 321.
    \end{equation*}
\end{example}

We now associate a composition of $n$ to each $w \in \RR^n$ as follows. First, if $w \in \RR^n$ is weakly increasing, there are unique indices $0 = a_0 < a_1 < \cdots < a_{m-1} < a_m = n$ with 
\begin{equation*}
    w_{a_0+1} = \cdots = w_{a_1} < w_{a_1+1} = \cdots = w_{a_2} < \cdots \cdots < w_{a_{m-1}+1} = \cdots = w_{a_m}
\end{equation*}
Define $\comp(w) = (a_1-a_0, a_2-a_1, \ldots, a_m-a_{m-1}) \vDash n$. For an arbitrary $w \in \RR^n$ we define $\comp(w)$ as $\comp(w')$ where $w'$ is the weakly increasing rearrangement of $w$.

\begin{example}
    $w = (-2, 0, 0, 0, 0.5, 1, 1)$ has composition $(1,3,1,2)$, as does any permutation of $w$.
\end{example}
 Thus, if $w$ is weakly increasing, then $w_i < w_{i+1}$ if and only if $i \in \set(\comp(w))$. For example, $\set(\comp(-2,0,0,0,0.5,1,1))$ is $\{1,4,5\}$.

An \emph{antichain} in a partially ordered set $(P,\leq)$ is a subset $A \subseteq P$ such that if $a_1, a_2 \in A$ are not equal, then $a_1 \not\leq a_2$ and $a_2 \not\leq a_1$.
\begin{lemma} \label{lem:antichain} If $v \in \RR^n$ is strictly increasing and $w \in \RR^n$ is weakly increasing, then $\{\sigma \in S_n : w \cdot \sigma v = 0\}$ is an antichain in the $\comp(w)$-Bruhat order. \end{lemma}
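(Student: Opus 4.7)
The plan is to prove the stronger assertion that $w \cdot \sigma v$ strictly decreases along every cover of the $\comp(w)$-Bruhat order; the antichain claim follows at once, since a chain between two elements of the zero set would force an intermediate strict inequality. Writing $\alpha = \comp(w)$ and using that $\alpha$-Bruhat is $\alpha!$ disjoint copies of Bruhat on $S_n/S_\alpha$, any cover $\sigma \lessdot_\alpha \sigma'$ comes from a cover $B_\bullet \lessdot B'_\bullet = t_{pq}B_\bullet$ in $S_n/S_\alpha$ with $p \in B_a$, $q \in B_b$, $a < b$, $p < q$; the $S_\alpha$-factor in the decomposition $\sigma = \word(B_\bullet)\tau$ is the same for $\sigma$ and $\sigma'$. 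So the problem reduces to computing the difference $w \cdot \sigma v - w \cdot \sigma' v$ for such a cover.

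The first step is to rewrite $w \cdot \sigma v$ so that its dependence on $\sigma$ is transparent in the ordered set partition data. Let $\mu_1 < \cdots < \mu_m$ be the distinct values of $w$, so $w_k = \mu_c$ when $k$ lies in the standard block of size $\alpha_c$. Grouping the terms of the dot product by standard $w$-block leads, via the identification of cosets with ordered set partitions, to an identity of the form
\begin{equation*}
w \cdot \sigma v = \sum_{c=1}^m \mu_c \sum_{k \in B_c} v_k.
\end{equation*}
Independence from the $S_\alpha$-factor $\tau$ reflects that $w$ is constant on standard blocks, hence invariant under the relevant $S_\alpha$-action.

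From here the proof is a one-line telescoping. Under the cover only $B_a$ and $B_b$ change (with $B'_a = (B_a\setminus\{p\})\cup\{q\}$ and $B'_b = (B_b \setminus \{q\})\cup\{p\}$), so only the terms $v_p$ and $v_q$ contribute to the difference, yielding
\begin{equation*}
w \cdot \sigma v - w \cdot \sigma' v = \mu_a(v_p - v_q) + \mu_b(v_q - v_p) = (\mu_b - \mu_a)(v_q - v_p).
\end{equation*}
Strictness of $\mu_a < \mu_b$ (from distinctness of the $w$-values with $a < b$) and $v_p < v_q$ (from strict monotonicity of $v$ with $p < q$) forces strict positivity of the right-hand side. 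The main obstacle I anticipate is pinning down the identification of $\sigma$ with its ordered set partition carefully: one must verify that the action convention $\sigma v = (v_{\sigma^{-1}(1)}, \ldots, v_{\sigma^{-1}(n)})$, the $S_n$-action on ordered set partitions, and the decomposition $(B_\bullet, \tau) \mapsto \word(B_\bullet)\tau$ are matched so that the cover $B_\bullet \lessdot t_{pq}B_\bullet$ really does induce the clean replacements $B_a \leftrightarrow B'_a$ and $B_b \leftrightarrow B'_b$ in the formula. Once this is settled the proof is the short calculation above.
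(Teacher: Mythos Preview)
The identity you propose, $w \cdot \sigma v = \sum_{c} \mu_c \sum_{k \in B_c} v_k$ with $B_\bullet$ the ordered set partition in the decomposition $\sigma = \word(B_\bullet)\tau$, is not correct, and neither is the claimed independence of $w \cdot \sigma v$ from the $S_\alpha$-factor $\tau$. Take $n=3$, $\alpha = (2,1)$, $w = (0,0,1)$, and $B_\bullet = (\{1,3\},\{2\})$: then $\word(B_\bullet)=132$ gives $w\cdot 132\,v = v_2$, whereas $\word(B_\bullet)\cdot(1\,2) = 312$ gives $w\cdot 312\,v = v_1$, yet your formula yields $\mu_1(v_1+v_3)+\mu_2 v_2 = v_2$ in both cases. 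The underlying reason is a left/right mismatch: constancy of $w$ on the standard blocks $I_c$ makes $w\cdot\sigma v$ invariant under $\sigma\mapsto\rho\sigma$ for $\rho\in S_\alpha$ (left multiplication, hence right cosets $S_\alpha\sigma$), but the decomposition $\sigma=\word(B_\bullet)\tau$ parametrizes the \emph{left} coset $\sigma S_\alpha$ as $\tau$ varies, and $w\cdot\sigma v$ is genuinely not constant on those. Since your telescoping difference rests entirely on the displayed identity and its $\tau$-independence, the argument collapses. You correctly flagged the identification bookkeeping as the ``main obstacle'', and it is indeed fatal to the approach as written.

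The paper's proof avoids this trap by never asserting $\tau$-independence. It computes the difference $w\cdot\tau\pi v - w\cdot t_{ij}\tau\pi v = (w_j-w_i)\bigl(v_{\pi^{-1}\tau^{-1}(j)}-v_{\pi^{-1}\tau^{-1}(i)}\bigr)$ directly, keeping the $S_\alpha$-factor $\pi$ visible throughout, and then observes that although the indices $\pi^{-1}\tau^{-1}(i)$ and $\pi^{-1}\tau^{-1}(j)$ depend on $\pi$, they are pinned to the standard intervals $I_a$ and $I_b$ respectively (since $\tau^{-1}$ carries $B_a\to I_a$ and $B_b\to I_b$, and $\pi^{-1}\in S_\alpha$ preserves each $I_c$), so their relative order---and hence the sign of the $v$-factor---is determined regardless of $\pi$. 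That is the missing idea in your attempt.
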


    \begin{proof}
        Set $\alpha = \comp(w)$. Suppose $\pi \in S_\alpha$ and $\tau = \word(B_\bullet)$ for some ordered set partition $B_\bullet$ of type $\alpha$. Let $i < j$ be such that $i$ appears in an earlier block of $B_\bullet$ than $j$ does, so that $\tau\pi <_{\alpha} t_{ij}\tau\pi$. Now,
        \begin{align} \label{eq:antichain}
            w\cdot \tau \pi v - w \cdot t_{ij} \tau \pi v &= w_i v_{\pi^{-1}\tau^{-1}(i)} - w_i v_{\pi^{-1}\tau^{-1}(j)} + w_j v_{\pi^{-1}\tau^{-1}(j)} - w_j v_{\pi^{-1}\tau^{-1}(i)} \nonumber \\
            &= (w_j - w_i)(v_{\pi^{-1}\tau^{-1}(j)} - v_{\pi^{-1}\tau^{-1}(i)}).
        \end{align}
        We claim that this quantity is strictly positive. First, the assumption  $i \in B_a$ and $j \in B_b$ with $a < b$ means that $w_i < w_j$, because $B_\bullet$ has type $\alpha = \comp(w)$. Also, if $\set(\alpha) = \{k_1 < \cdots < k_m\}$, then $\tau^{-1}(i) \in [k_{a-1}+1, k_a]$ and $\tau^{-1}(j) \in [k_{b-1}+1, k_b]$ by definition of $\tau = \word(B_\bullet)$. But $\pi^{-1}$ preserves both of those intervals because it is in $S_{\alpha}$, so $\pi^{-1}\tau^{-1}(i) < \pi^{-1}\tau^{-1}(j)$. The positivity of \eqref{eq:antichain} proves the lemma: if $\sigma <_{\alpha} \sigma'$, then $\sigma$ and $\sigma'$ are related by a sequence of relations of the type just considered, and so $w \cdot \sigma v - w \cdot \sigma' v > 0$.

    \end{proof}

The preceding lemma shows that we must bound the sizes of antichains in $\alpha$-Bruhat order, which can be done in terms of the following polynomials.
\begin{definition} The \emph{$q$-multinomial coefficient} associated to a composition $\alpha = (\alpha_1, \ldots, \alpha_m)$ of $n$ is the polynomial 
    \begin{equation*}
        \qbinom{n}{\alpha} := \frac{[n]_q!}{[\alpha_1]_q! \cdots [\alpha_m]_q!},
    \end{equation*}
    where $[n]_q = 1 + q + q^2 + \cdots + q^{n-1}$ and $[n]_q! = [1]_q [2]_q \cdots [n]_q$. As a special case, the \emph{$q$-binomial coefficient} $\smallqbinom{n}{k}$  is $\smallqbinom{n}{(k,n-k)}$.
\end{definition}
Let $\maxcoeff{f}$ denote the largest coefficient in a polynomial $f$. Recall that $\alpha!$ means $\prod_i \alpha_i!$ when $\alpha$ is a sequence.

\begin{thm} \label{thm:q-binomial-reduction}
    Let $v, w \in \RR^n$ where $v$ has distinct coordinates. Then
    \begin{equation*}
        \pair(v,w) := \#\{\sigma \in S_n : w \cdot \sigma v = 0\} \leq \comp(w)! \maxcoeff{\qbinom{n}{\comp(w)}}.
    \end{equation*}
\end{thm}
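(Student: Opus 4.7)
The plan is to combine Lemma~\ref{lem:antichain} with two standard facts about Bruhat order on the parabolic quotient $S_n/S_\alpha$: that its rank generating function is the $q$-multinomial coefficient $\smallqbinom{n}{\alpha}$, and that it has the Sperner property.

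First I would reduce to the case in which $v$ is strictly increasing and $w$ is weakly increasing. For any $\tau \in S_n$ one has $w \cdot \sigma v = (\tau w) \cdot (\tau \sigma v)$, so the substitution $\sigma' = \tau \sigma$ gives $\pair(v,w) = \pair(v, \tau w)$, and a symmetric argument gives $\pair(v,w) = \pair(\tau v, w)$. Since $v$ has distinct coordinates, some $\tau$ makes $\tau v$ strictly increasing, and another makes $\tau w$ weakly increasing. Because $\comp(w)$ depends only on the multiset of coordinates of $w$, the right-hand side of the inequality is unchanged, so we may pass to the reduced setting.

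With this in hand, set $\alpha = \comp(w)$. By Lemma~\ref{lem:antichain}, the set $A := \{\sigma \in S_n : w \cdot \sigma v = 0\}$ is an antichain in $\alpha$-Bruhat order on $S_n$. By definition, $\alpha$-Bruhat order is isomorphic to a disjoint union of $\alpha!$ copies of Bruhat order on $S_n/S_\alpha$, so $A$ splits as a union of at most $\alpha!$ antichains, one inside each copy. It therefore suffices to prove that every antichain in Bruhat order on $S_n/S_\alpha$ has size at most $\maxcoeff{\smallqbinom{n}{\alpha}}$.

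This last bound follows from two classical inputs. First, Bruhat order on $S_n/S_\alpha$ is ranked by the length (inversion count) of the minimum-length coset representative, and the rank generating function is well known to be $\smallqbinom{n}{\alpha}$; this is the standard interpretation of the $q$-multinomial as a generating function for length on parabolic quotients of $S_n$. Second, this poset has the Sperner property: a theorem of Stanley shows that Bruhat order on $W/W_P$ for any Weyl group $W$ and parabolic subgroup $W_P$ is in fact Peck (rank-symmetric, rank-unimodal, and strongly Sperner), by exhibiting an $\mathfrak{sl}_2$-action on the free vector space over the cosets whose weight decomposition is the rank decomposition. Granting these, each antichain in $S_n/S_\alpha$ has size at most the largest rank, namely $\maxcoeff{\smallqbinom{n}{\alpha}}$, and combining this with the decomposition gives $|A| \leq \alpha! \maxcoeff{\smallqbinom{n}{\alpha}}$, as required. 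The substantive mathematical input is Stanley's Sperner theorem; the rest is the reduction above and Lemma~\ref{lem:antichain}, so the main obstacle lies entirely in invoking (rather than reproving) that external result.
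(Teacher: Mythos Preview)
Your proof is correct and follows essentially the same approach as the paper: reduce to monotone $v$ and $w$, apply Lemma~\ref{lem:antichain} to get an antichain in $\alpha$-Bruhat order, decompose into $\alpha!$ copies of $S_n/S_\alpha$, and invoke Stanley's Sperner theorem together with the identification of the rank generating function as $\smallqbinom{n}{\alpha}$. The only cosmetic difference is that the paper phrases the last step as the disjoint union itself being Sperner with rank generating function $\alpha!\smallqbinom{n}{\alpha}$, whereas you bound each copy separately and sum; these are equivalent.
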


\begin{proof}
    Abbreviate $\comp(w)$ as $\alpha$.
Since $\pair(v,w) = \pair(\sigma v, \tau w)$ for any $\sigma, \tau \in S_n$, we can assume that $v$ is strictly increasing and $w$ is weakly increasing. By Lemma~\ref{lem:antichain}, $\{\sigma \in S_n : w \cdot \sigma v = 0\}$ is an antichain in  $\alpha$-Bruhat order, so there is also an antichain of size $\pair(v,w)$ in the isomorphic poset consisting of $\alpha!$ disjoint copies of $S_n/S_{\alpha}$.

Stanley \cite[Theorem 3.1]{sperner-hard-lefschetz} showed that the poset $S_n / S_\alpha$ has the \emph{Sperner property}: it is a ranked poset in which the set of elements of rank $r$, for some fixed $r$, form an antichain of maximal size. The rank generating function of $S_n / S_\alpha$ is $\smallqbinom{n}{\alpha}$ \cite[\S 4]{sperner-hard-lefschetz}, and so the Sperner property implies that the largest antichain in $S_n/S_\alpha$ has size $M(\smallqbinom{n}{\alpha})$. It is easy to see from this that the disjoint union of $\alpha!$ copies of $S_n / S_\alpha$ is also Sperner and has rank generating function $\alpha! \smallqbinom{n}{\alpha}$, from which the theorem follows.
\end{proof}

We now bound the coefficients of $\smallqbinom{n}{\alpha}$. Given two compositions $\alpha$ and $\beta$ of $n$, we say that $\beta$ \emph{refines} $\alpha$ and write $\alpha \prec \beta$ if one can obtain $\beta$ from $\alpha$ by a sequence of operations of the form 
\begin{equation*}
    \gamma \leadsto (\gamma_1, \ldots, \gamma_{i-1}, \gamma_i-p, p, \gamma_{i+1}, \ldots, \gamma_m) \quad \text{where $0 < p < \gamma_i$ and $1 \leq i \leq m$}.
\end{equation*} 
For example, $(1,4,3,1,2)$ refines $(1,7,1,2)$, as does $(1,2,2,3,1,2)$.
\begin{lemma}  \label{lem:refinement}
If $\alpha \prec \beta$ are compositions of $n$, then $\beta! \maxcoeff{\smallqbinom{n}{\beta}}\leq \alpha! \maxcoeff{\smallqbinom{n}{\alpha}}$. \end{lemma}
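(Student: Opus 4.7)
The plan is to reduce to the case of a single refinement step and then compare $q$-multinomials by a multiplicative factor.

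First I would note that, by transitivity of $\prec$, it suffices to handle the case where $\beta = (\alpha_1, \ldots, \alpha_{i-1}, \alpha_i - p, p, \alpha_{i+1}, \ldots, \alpha_m)$ is obtained from $\alpha$ by a single refinement step, with $0 < p < \alpha_i$. Writing out the factorial formulas, the key observation is the multiplicative identity
\begin{equation*}
    \qbinom{n}{\beta} = \qbinom{n}{\alpha} \cdot \qbinom{\alpha_i}{p},
\end{equation*}
since the $q$-factorials in the denominators of $\smallqbinom{n}{\alpha}$ and $\smallqbinom{n}{\beta}$ differ exactly by replacing $[\alpha_i]_q!$ by $[\alpha_i-p]_q! [p]_q!$. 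Similarly, the ordinary factorial ratio is $\alpha!/\beta! = \alpha_i!/((\alpha_i-p)! p!) = \binom{\alpha_i}{p}$, which is the value of $\smallqbinom{\alpha_i}{p}$ at $q = 1$.

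Given this, the statement $\beta! \maxcoeff{\smallqbinom{n}{\beta}} \leq \alpha! \maxcoeff{\smallqbinom{n}{\alpha}}$ rearranges to
\begin{equation*}
    \maxcoeff{\qbinom{n}{\alpha} \cdot \qbinom{\alpha_i}{p}} \leq \binom{\alpha_i}{p} \maxcoeff{\qbinom{n}{\alpha}}.
\end{equation*}
This is a special case of the general fact that for polynomials $f, g$ with nonnegative coefficients, $\maxcoeff{fg} \leq \maxcoeff{f} \cdot g(1)$: any coefficient $[q^k](fg) = \sum_j [q^j]g \cdot [q^{k-j}]f$ is bounded above by $\maxcoeff{f}$ times the sum of the coefficients of $g$, which is $g(1)$. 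Applying this with $f = \smallqbinom{n}{\alpha}$ (which has nonnegative integer coefficients) and $g = \smallqbinom{\alpha_i}{p}$ (likewise, and with $g(1) = \binom{\alpha_i}{p}$) completes the proof.

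There is no real obstacle here: once the factorization $\smallqbinom{n}{\beta} = \smallqbinom{n}{\alpha} \cdot \smallqbinom{\alpha_i}{p}$ is in hand, the inequality is an immediate consequence of the coefficients being nonnegative and the fact that specializing $q = 1$ turns each $q$-binomial into its ordinary counterpart. The only thing worth double-checking is that both $q$-multinomials genuinely have nonnegative coefficients, which follows from their interpretation as rank generating functions (as already cited from Stanley in the proof of Theorem~\ref{thm:q-binomial-reduction}).
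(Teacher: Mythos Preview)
Your proof is correct and is essentially the same as the paper's: both reduce to a single refinement step, use the factorization $\smallqbinom{n}{\beta} = \smallqbinom{n}{\alpha}\smallqbinom{\alpha_i}{p}$ together with $\alpha!/\beta! = \binom{\alpha_i}{p}$, and then invoke the elementary bound $\maxcoeff{fg}\leq f(1)\maxcoeff{g}$ for polynomials with nonnegative coefficients. The only cosmetic difference is which factor plays the role of $f$ and which plays the role of $g$.
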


\begin{proof} 
    It suffices to assume that $\beta = (\alpha_1, \ldots, \alpha_{i-1}, \alpha_i-p, p, \alpha_{i+1}, \ldots, \alpha_m)$. Then
    \begin{equation*}
        \beta! \qbinom{n}{\beta} = {\alpha_i \choose p}^{-1} \qbinom{\alpha_i}{p} \cdot \alpha! \qbinom{n}{\alpha}.
    \end{equation*}
    As ${\alpha_i \choose p}^{-1} \smallqbinom{\alpha_i}{p}$ evaluated at $q = 1$ is $1$, the lemma follows from the simple fact that if $f$ and $g$ are polynomials with nonnegative coefficients, then $M(fg) \leq f(1)M(g)$.
\end{proof}

Lemma~\ref{lem:refinement} shows that $\max_{\alpha \vDash n}\alpha! \maxcoeff{\smallqbinom{n}{\alpha}}$ is achieved when $\alpha$ has length at most $2$. Writing $\smallqbinom{n}{k}$ for the $q$-binomial coefficient $\smallqbinom{n}{(k,n-k)}$, we must maximize $k!(n-k)!M(\smallqbinom{n}{k})$ for $0 < k < n$. The coefficient of $q^r$ in $\smallqbinom{n}{k}$ has a useful combinatorial interpretation: it is the number of partitions of $r$ into at most $k$ parts each of size at most $n-k$, i.e., integer sequences $n-k \geq \lambda_1 \geq \cdots \geq \lambda_k \geq 0$ with $\sum_i \lambda_i = r$.

From this interpretation one works out that $\maxcoeff{\smallqbinom{n}{2}} = 2\lfloor n/2 \rfloor (n-2)!$, and so Theorem~\ref{thm:main} is equivalent to the statement that $k!(n-k)!M(\smallqbinom{n}{k})$ is maximized when $k = 2$. The desired inequality $k!(n-k)!M(\smallqbinom{n}{k}) \leq 2\lfloor n/2 \rfloor (n-2)!$ is equivalent to $M(\smallqbinom{n}{k}) \leq \frac{1}{n} {n \choose k}$ for odd $n$, and to the weaker inequality $M(\smallqbinom{n}{k}) \leq \frac{1}{n-1} {n \choose k}$ for even $n$.

One could hope to prove $M(\smallqbinom{n}{k}) \leq \frac{1}{n} {n \choose k}$ combinatorially by explicitly dividing the set of all ${n \choose k}$ partitions with at most $k$ parts of size at most $n-k$ into $n$ disjoint groups of equal size in such a way that, for any fixed $r$, all the partitions of size $r$ are in one of the groups. An obvious candidate for such a grouping would be to divide up the partitions according to their sum modulo $n$, or equivalently, to consider $\smallqbinom{n}{k}$ modulo $q^n-1$. This does not quite work---the groups obtained this way need not have exactly equal sizes, and indeed $n$ may not even divide ${n \choose k}$---but it will be close enough to allow us to deduce the required inequality.

In the next lemma, $\mu$ and $\phi$ are the M\"obius function and totient function respectively, and $(a,b)$ means $\gcd(a,b)$.
\begin{lemma} \label{lem:congruence} Given $n \in \NN$ and $d \mid n$, define polynomials 
    \begin{equation*}
        F_{n,d}(q) = \begin{cases}
            \displaystyle \frac{1}{n}\sum_{i=0}^{n-1} \mu\left(\frac{d}{(d,i)}\right)\phi((d,i)) q^i & \text{if $d$ is squarefree}\\
            F_{n/e, d/e}(q^e) & \text{if $e^2 \mid d$ for some $e > 1$}            
        \end{cases}
    \end{equation*}
    Then
    \begin{equation*}
        \qbinom{n}{k} \equiv \sum_{d|(n,k)} {n/d \choose k/d} F_{n,d} \pmod{q^n-1}.
    \end{equation*}
\end{lemma}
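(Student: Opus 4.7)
The plan is to verify the congruence by evaluating both sides at each $n$-th root of unity. Each $F_{n,d}$ has degree less than $n$ — for squarefree $d$ by inspection, and for non-squarefree $d$ because $F_{n,d}(q) = F_{n/e,d/e}(q^e)$ is a polynomial in $q^e$ of degree at most $(n/e - 1)e < n$. So the right-hand side of the claimed congruence has degree less than $n$, as does the reduction of $\smallqbinom{n}{k}$ modulo $q^n-1$, and pointwise agreement at the $n$ distinct $n$-th roots of unity suffices.

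The heart of the argument is showing that for $d \mid n$,
\[
    F_{n,d}(\zeta_n^j) = \begin{cases} 1 & \text{if } \operatorname{ord}(\zeta_n^j) = d, \\ 0 & \text{otherwise,}\end{cases}
\]
where $\zeta_n = e^{2\pi i/n}$. For squarefree $d$, I would invoke the classical H\"older formula for Ramanujan sums to identify $\mu(d/(d,i))\phi((d,i))$ with $c_d(i) = \sum_{l \in (\mathbb{Z}/d\mathbb{Z})^*} \zeta_d^{il}$. Substituting this into the definition of $F_{n,d}$ and swapping sums yields
\[
    F_{n,d}(\zeta_n^j) = \frac{1}{n}\sum_{l \in (\mathbb{Z}/d\mathbb{Z})^*}\sum_{i=0}^{n-1} \zeta_n^{i(ln/d+j)},
\]
in which the inner geometric sum is $n$ if $ln/d + j \equiv 0 \pmod n$ and vanishes otherwise. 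A short calculation shows this congruence has a solution $l$ coprime to $d$ precisely when $n/d \mid j$ and $j/(n/d)$ is a unit modulo $d$, i.e., when $(n,j) = n/d$, which is exactly the condition $\operatorname{ord}(\zeta_n^j) = d$. For non-squarefree $d$, I would induct via the recursion, using $\zeta_n^e = \zeta_{n/e}$ to rewrite $F_{n,d}(\zeta_n^j) = F_{n/e, d/e}(\zeta_{n/e}^j)$; by induction this equals $1$ iff $(n/e, j) = n/d$. The key observation closing the induction is that $e^2 \mid d$ forces $\operatorname{rad}(d/e) = \operatorname{rad}(d)$, so the coprimality conditions $(j/(n/d), d/e)=1$ and $(j/(n/d), d)=1$ coincide, making $(n/e,j) = n/d$ equivalent to $(n,j) = n/d$.

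With the indicator identity in hand, the proof concludes via the $q$-Lucas theorem: at an $n$-th root of unity $\zeta$ of order $m$, $\smallqbinom{n}{k}(\zeta)$ equals $\binom{n/m}{k/m}$ if $m \mid k$ and vanishes otherwise. Evaluating the right-hand side at $\zeta$, only the $d = m$ term survives; it contributes $\binom{n/m}{k/m}$ precisely when $m$ appears in the sum, i.e., when $m \mid (n,k)$, which given $m \mid n$ reduces to $m \mid k$. Both sides thus agree at every $n$-th root of unity, proving the congruence. The main obstacle is the indicator identity for $F_{n,d}$: the squarefree case hinges on recognizing H\"older's formula in the ostensibly unrelated expression $\mu(d/(d,i))\phi((d,i))$, while the non-squarefree case relies on the radical equality $\operatorname{rad}(d/e) = \operatorname{rad}(d)$ to faithfully transport the coprimality condition through the recursion.
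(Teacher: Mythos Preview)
Your proof is correct and follows the same high-level architecture as the paper: both reduce to showing that $F_{n,d}$ evaluates to $\delta_{cd}$ at a primitive $c$-th root of unity, handle the non-squarefree case by the same inductive step (exploiting $e^2\mid d$ to make the reduction faithful), and then invoke the $q$-Lucas/Sagan evaluation of $\smallqbinom{n}{k}$ at roots of unity to conclude. Your framing via evaluation at all $n$-th roots of unity is equivalent to the paper's use of the Chinese Remainder Theorem over the cyclotomic factors of $q^n-1$.

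The genuine difference is in the squarefree case. The paper computes $F_{n,d}(\zeta_c)$ directly by stratifying the sum according to $e=(d,i)$, evaluating two auxiliary character sums, and then simplifying via multiplicativity of $\mu$ and $\phi$ and M\"obius inversion---a page of explicit manipulation. You instead recognize at the outset that for squarefree $d$ the coefficient $\mu(d/(d,i))\phi((d,i))$ is exactly the Ramanujan sum $c_d(i)$ (H\"older's formula, using that $\phi(d)=\phi(d/(d,i))\phi((d,i))$ when $d$ is squarefree), so that $F_{n,d}$ is the discrete Fourier transform of $c_d$ over $\mathbb{Z}/n\mathbb{Z}$; orthogonality then gives the indicator property in two lines. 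Your route is shorter and more conceptual, at the cost of importing the Ramanujan-sum identity; the paper's route is self-contained but heavier.
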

As defined, the $F_{n,d}$ depend not only on $n$ and $d$ but on a choice of $e > 1$ with $e^2\mid d$ whenever $d$ is not squarefree. In fact they are independent of these choices, but as the choice of $e$ will not affect the proof, we omit the verification.

\begin{proof} Let $\Phi_d$ be the $d$\textsuperscript{th} cyclotomic polynomial, so that $q^n-1 = \prod_{d|n} \Phi_d(q)$. Sagan's formulas \cite[Theorem 2.2]{sagan-cyclotomic} for $\smallqbinom{n}{k}$ modulo $\Phi_d$ give
    \begin{equation*}
        \qbinom{n}{k} \equiv 
        \begin{cases}
            {n/d \choose k/d} & \text{if $d \mid (n,k)$}\\
            0 & \text{otherwise}
        \end{cases} \pmod{\Phi_d}
    \end{equation*}
 whenever $d \mid n$. By the Chinese remainder theorem, it therefore suffices to show that $F_{n,d}(q) \equiv \delta_{cd} \pmod{\Phi_c}$ for all $c \mid n$, or equivalently that $F_{n,d}(\zeta_c) = \delta_{cd}$, where $\zeta_c$ is a primitive $c$\textsuperscript{th} root of unity and $\delta$ is the Kronecker delta.

    First suppose that $e^2 \mid d$ for some $e > 1$. Given that $\zeta_c^e$ is a primitive $(c/(c,e))$\textsuperscript{th} root of unity, induction gives $F_{n,d}(\zeta_c) = F_{n/e,d/e}(\zeta_c^e) = \delta_{c/(c,e),d/e}$. If $\frac{d}{e} = \frac{c}{(c,e)}$, then $\frac{d}{e} \mid c$ and hence $e \mid c$ (because $e^2 \mid d$). This shows that $\delta_{c/(c,e),d/e} = \delta_{c/e,d/e} = \delta_{cd}$.

    By the previous paragraph, we are reduced to the case where $d$ is squarefree. For a fixed divisor $e\mid d$, we have
    \begin{equation*}
        \{i : 0 \leq i < n, (d,i) = e\} = \left\{je + rd : 0 \leq j < \frac{d}{e}, 0 \leq r < \frac{n}{d}, (j,d/e) = 1\right \},
    \end{equation*}
    so that 
    \begin{equation} \label{eq:F-divisor-formula}
        F_{n,d}(\zeta_c) = \frac{1}{n} \sum_{e\mid d} \mu\left(\frac{d}{e}\right)\phi(e)\sum_{\substack{0 \leq j < d/e\\ (j,d/e) = 1}}\sum_{0 \leq r < n/d} \zeta_c^{je + rd}.
    \end{equation}
    We consider the two inner sums here one at a time. First, $c\mid n$ implies $\tfrac{c}{(c,d)} \mid \tfrac{n}{d}$, so
    \begin{equation} \label{eq:sum-2}
        \sum_{0 \leq r < n/d} \zeta_c^{rd} = \sum_{0 \leq r < n/d} \zeta_{c/(c,d)}^r = \frac{n/d}{c/(c,d)} \sum_{0 \leq r < c/(c,d)} \zeta_{c/(c,d)}^r = \tfrac{n}{d}\delta_{c,(c,d)}.
    \end{equation}
    This means we may assume from now on that $c = (c,d)$, i.e. $c \mid d$. As for the first sum, observe that $\tfrac{c}{(c,e)} \mid \tfrac{d}{e}$, and that $\tfrac{d/e}{c/(c,e)}$ and $c/(c,e)$ are coprime because $d$ is squarefree. The Chinese remainder theorem therefore implies that
    \begin{gather*}
        \{0 \leq j < \tfrac{d}{e} : (j,\tfrac{d}{e}) = 1\} \to \{0 \leq j_1 < \tfrac{c}{(c,e)} : (j, \tfrac{c}{(c,e)}) = 1\} \times \{0 \leq j_2 < \tfrac{d/e}{c/(c,e)} : (j, \tfrac{d/e}{c/(c,e)}) = 1\},\\
        j \mapsto (j \bmod \tfrac{c}{(c,e)}, j \bmod \tfrac{d/e}{c/(c,e)})
    \end{gather*}
    is a bijection.  Since $\zeta_{c/(c,e)}^j$ depends only on $j \bmod \tfrac{c}{(c,e)}$, this bijection shows that
    \begin{equation} \label{eq:sum-1}
        \sum_{\substack{0 \leq j < d/e\\ (j,d/e) = 1}} \zeta_c^{je} = \sum_{\substack{0 \leq j < d/e\\ (j,d/e) = 1}} \zeta_{c/(c,e)}^j = \phi(\tfrac{d/e}{c/(c,e)}) \sum_{\substack{0 \leq j < c/(c,e)\\ (j,c/(c,e)) = 1}} \zeta_{c/(c,e)}^j = \phi(\tfrac{d/e}{c/(c,e)}) \mu(\tfrac{c}{(c,e)}),
    \end{equation}
    where we have used the general identity $\sum_{0 \leq j < m, (j,m) = 1} \zeta_m^j = \mu(m)$.

    Putting \eqref{eq:sum-2} and \eqref{eq:sum-1} into \eqref{eq:F-divisor-formula} and still assuming $c \mid d$, we get 
    \begin{equation*}
        F_{n,d}(\zeta_c) = \frac{1}{d} \sum_{e\mid d}\mu\left(\frac{d}{e}\right) \phi(e)\phi\left(\frac{d/e}{c/(c,e)}\right)\mu\left(\frac{c}{(c,e)}\right).
    \end{equation*}
    Using the multiplicativity of $\phi$ and $\mu$ and the coprimality of $e$ and $\tfrac{d/e}{c/(c,e)}$ and of $d/c$ and $(c,e)$, this is 
    \begin{equation*}
        F_{n,d}(\zeta_c) = \frac{1}{d} \phi\left(\frac{d}{c}\right) \mu\left(\frac{d}{c}\right) \sum_{e\mid d}\phi((c,e))\mu\left(\frac{e}{(c,e)}\right).
    \end{equation*}
    Because $d$ is squarefree, every divisor $e\mid d$ can be written uniquely as $e = fg$ where $f\mid c$ and $g \mid \tfrac{d}{c}$, and one then has $(c,e) = f$. Thus, 
    \begin{equation*}
        F_{n,d}(\zeta_c) = \frac{1}{d} \phi\left(\frac{d}{c}\right) \mu\left(\frac{d}{c}\right) \sum_{f\mid c}\sum_{g\mid d/c}\phi(f)\mu(g).
    \end{equation*}
    By M\"obius inversion, $\sum_{g \mid d/c} \mu(g) = \delta_{cd}$, and so 
    \begin{equation*}
        F_{n,d}(\zeta_c) = \delta_{cd} \frac{1}{d}  \sum_{f\mid d} \phi(f) = \delta_{cd},
    \end{equation*}
    where the last identity $\sum_{f\mid d}\phi(f) = d$ is due to Gauss.
\end{proof}
 
If $f(q)$ is a polynomial, we write $[q^r]f(q)$ for the coefficient of $q^r$ in $f(q)$.
\begin{lemma} \label{lem:q-binomial-inequality-1} Fix $n \geq 1$ and $1 < k < n-1$ and an integer $i$. Then
\begin{equation*}
\left| \frac{1}{n}{n \choose k} - \sum_{r \equiv i \bmod{n}}[q^r] \smallqbinom{n}{k} \right| \leq  \sqrt{\frac{1}{n} {n \choose k}}
\end{equation*} \end{lemma}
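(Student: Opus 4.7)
Write $a_i := \sum_{r \equiv i \bmod n}[q^r]\smallqbinom{n}{k}$ for the quantity in the lemma; equivalently, $a_i$ is the coefficient of $q^i$ in the reduction of $\smallqbinom{n}{k}$ modulo $q^n - 1$. By Lemma~\ref{lem:congruence}, $a_i = \sum_{d \mid (n,k)}\binom{n/d}{k/d}[q^i]F_{n,d}(q)$, and since $F_{n,1}(q) = (1+q+\cdots+q^{n-1})/n$, the $d=1$ term always contributes exactly $\binom{n}{k}/n$. In Fourier-analytic language on $\mathbb{Z}/n$, with $\zeta = e^{2\pi i/n}$, one has $a_i = \frac{1}{n}\sum_{j=0}^{n-1}\smallqbinom{n}{k}(\zeta^j)\zeta^{-ij}$, and Sagan's formula (used to prove Lemma~\ref{lem:congruence}) asserts $\smallqbinom{n}{k}(\zeta^j) = \binom{n/c}{k/c}$ when $c := n/\gcd(n,j)$ divides $(n,k)$, and $0$ otherwise.

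The plan is then to apply Parseval's identity on $\mathbb{Z}/n$, which gives
\[
\sum_{i=0}^{n-1}\left(a_i - \tfrac{1}{n}\tbinom{n}{k}\right)^{2} \;=\; \frac{1}{n}\sum_{j=1}^{n-1}\bigl|\smallqbinom{n}{k}(\zeta^j)\bigr|^{2} \;=\; \frac{T}{n}, \qquad T := \sum_{\substack{c \mid (n,k)\\ c > 1}}\phi(c)\binom{n/c}{k/c}^{2}.
\]
Since any single squared deviation is bounded by the total, $\max_i|a_i - \binom{n}{k}/n|^2 \leq T/n$, and the lemma reduces to the combinatorial inequality $T \leq \binom{n}{k}$.

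The main obstacle is establishing this last inequality. The dominant contribution to $T$ is the $c=2$ term, present precisely when $2 \mid (n,k)$: here Vandermonde's convolution $\binom{n}{k} = \sum_j \binom{n/2}{j}\binom{n/2}{k-j}$ gives $\binom{n/2}{k/2}^2 \leq \binom{n}{k}$, and in fact provides strict slack of at least $2\binom{n/2}{k/2-1}\binom{n/2}{k/2+1}$, positive because $1 < k < n - 1$. For each $c \geq 3$ dividing $(n,k)$, iterating Vandermonde yields $\binom{n/c}{k/c}^{c} \leq \binom{n}{k}$, so $\phi(c)\binom{n/c}{k/c}^{2} \leq \phi(c)\binom{n}{k}^{2/c}$, which is substantially smaller than $\binom{n}{k}$. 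The remaining step is to verify that the cumulative $c \geq 3$ contribution fits inside the slack from $c = 2$; this can be established by comparing $\binom{n/c}{k/c}$ with $\binom{n/2}{k/2 \pm 1}$ using monotonicity of binomial coefficients, handling a few small values of $n$ by direct inspection.
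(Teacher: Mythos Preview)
Your reduction is correct and genuinely different from the paper's argument. The paper bounds the deviation pointwise via the triangle inequality: using the explicit $F_{n,d}$ from Lemma~\ref{lem:congruence} and the bound $|[q^i]F_{n,d}| \le d/n$, it obtains
\[
\Bigl|a_i-\tfrac1n\tbinom{n}{k}\Bigr|\;\le\;\sum_{\substack{d\mid(n,k)\\d>1}}\tfrac{d}{n}\tbinom{n/d}{k/d},
\]
and then spends the bulk of the proof (including a calculus estimate and a computer check for $n<18^2$) showing this sum is at most $\sqrt{\tfrac1n\binom{n}{k}}$. Your $L^2$/Parseval route bypasses the coefficients of $F_{n,d}$ entirely---only the values $\smallqbinom{n}{k}(\zeta^j)$ from Sagan's formula are needed---and collapses the problem to the single clean inequality $T=\sum_{c\mid(n,k),\,c>1}\phi(c)\binom{n/c}{k/c}^2\le\binom{n}{k}$. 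This is an appealing simplification, and the computation of $T$ via Parseval is carried out correctly.

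The weak point is the final paragraph. Your ``slack from $c=2$'' argument presupposes $2\mid(n,k)$; when $n$ or $k$ is odd (say $n=9$, $k=3$) there is no $c=2$ term at all, and you have not said what replaces it. That case is in fact easier---every surviving $c$ is at least $3$, so $\phi(c)\binom{n/c}{k/c}^{2}\le\phi(c)\binom{n}{k}^{2/3}$ and one can bound $T\le(n,k)\binom{n}{k}^{2/3}$---but it still needs to be stated. Even in the $2\mid(n,k)$ case, the sentence ``comparing $\binom{n/c}{k/c}$ with $\binom{n/2}{k/2\pm 1}$ using monotonicity'' is too vague to count as a proof: you have not indicated which monotonicity, nor why the sum over all $c\ge3$ of $\phi(c)\binom{n/c}{k/c}^2$ is dominated by $2\binom{n/2}{k/2-1}\binom{n/2}{k/2+1}$. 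These gaps are fillable, but as written the argument for $T\le\binom{n}{k}$ is a sketch rather than a proof; the paper's corresponding step is likewise laborious, so you should expect to do comparable work here (or find a slicker uniform argument).
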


    \begin{proof} By Lemma~\ref{lem:congruence},
        \begin{equation*}
            \sum_{r \equiv i \bmod{n}} [q^r]\qbinom{n}{k} = [q^i] \sum_{d|(n,k)} {n/d \choose k/d} F_{n,d}.
        \end{equation*}
        Since $F_{n,1} = \frac{1}{n}[n]_q$ and the coefficients of $F_{n,d}$ are bounded by $\frac{d}{n}$ in absolute value,
        \begin{align} \label{eq:error}
           \left|  \sum_{r \equiv i \bmod{n}} [q^r]\qbinom{n}{k} - \frac{1}{n} {n \choose k}  \right| = \left|\sum_{\substack{d\mid (n,k)\\ d > 1}}  {n/d \choose k/d}[q^i] F_{n,d}\right| \leq  \sum_{\substack{d\mid (n,k)\\ d > 1}}  \frac{d}{n}{n/d \choose k/d}.
\end{align} 
The Vandermonde identity ${a_1 + \cdots + a_p \choose b} = \sum_{b_1 + \cdots + b_p = b} {a_1 \choose b_1} \cdots {a_p \choose b_p}$ implies ${n/d \choose k/d} \leq {n \choose k}^{1/d}$, so 
\begin{align} \label{eq:M-inequality}
    \left|  \sum_{r \equiv i \bmod{n}} [q^r]\qbinom{n}{k} - \frac{1}{n} {n \choose k}  \right| &\leq \sum_{\substack{d\mid (n,k)\\ d > 1}}  \frac{d}{n}{n \choose k}^{1/d} \leq  \sum_{d=2}^{k}  \frac{d}{n}{n \choose k}^{1/d} \nonumber \\
    & \leq \frac{9}{n} {n \choose k}^{1/2} + \sum_{d=5}^{k}  \frac{d}{n}{n \choose k}^{1/d}
 \end{align}
Let us assume that $n \geq 81$. If $k \leq 4$, then the lemma holds because $\frac{9}{n} \leq \frac{1}{\sqrt{n}}$, so assume $k \geq 5$ from now on. We claim that, for $5 \leq d \leq k$ and $n$ sufficiently large,
\begin{equation*}
    \frac{d}{n} {n \choose k}^{1/d} \leq \frac{1}{k-4} \left(\frac{1}{\sqrt{n}}{n \choose k}^{1/2} - \frac{9}{n}{n \choose k}^{1/2}\right).
\end{equation*}
Given \eqref{eq:M-inequality}, the lemma would follow from this claim. A little rearrangement shows that the claim is equivalent to the inequality $\left( \frac{d(k-4)}{\sqrt{n}-9} \right)^{\frac{2d}{d-2}} \leq {n \choose k}$. We may assume without loss of generality that $k \leq \frac{n}{2}$, in which case ${n \choose 5} \leq {n \choose k}$, so it suffices to show that $\left( \frac{d(n/2-4)}{\sqrt{n}-9} \right)^{\frac{2d}{d-2}} \leq {n \choose 5}$. It is a calculus exercise to show that $d^{\frac{2d}{d-2}}$ is decreasing on $[5,\gamma]$ and increasing on $[\gamma, \infty)$ where $\gamma \approx 5.36$, so $\max_{5 \leq d \leq n/2} d^{\frac{2d}{d-2}} = \max(5^{10/3}, (\frac{n}{2})^{\frac{2n}{n-4}})$. Assuming $n \geq 18^2$ so that $\sqrt{n}-9 \geq \frac{1}{2}\sqrt{n}$,
\begin{align*}
    \left( \frac{d(n/2-4)}{\sqrt{n}-9} \right)^{\frac{2d}{d-2}} &\leq \max(5^{10/3}, (\tfrac{n}{2})^{\frac{2n}{n-4}}) \max_{5 \leq d \leq k}\left(\frac{n/2}{\sqrt{n}/2}\right)^{\frac{2d}{d-2}} \leq \max(5^{10/3}, (\tfrac{n}{2})^{\frac{2n}{n-4}}) n^{5/3}\\
    &\leq \left(\frac{n}{2}\right)^{81/40} n^{5/3} \leq \frac{1}{4}n^4 \quad \text{(because we are assuming $n \geq 18^2$)}.
\end{align*}
Using bounds on roots one proves that $n^4/4 \leq {n \choose 5}$ when $n \geq 40$. It remains to verify the lemma for $n < 18^2$, which we have done by computer.

   \end{proof}

\begin{lemma} \label{lem:binomial-inequality} If $n \geq k \geq 2j \geq 2$, then ${n-j \choose k-j}^2 {n \choose k}^{-1} \geq (\frac{n}{k})^{\log (2)(k-2j)}$. \end{lemma}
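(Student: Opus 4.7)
My plan is to establish the stronger inequality ${n-j\choose k-j}^2/{n\choose k} \geq (n/k)^{k-2j}$, which implies the lemma because $n/k \geq 1$, $k-2j \geq 0$, and $\log 2 < 1$ together give $(n/k)^{k-2j} \geq (n/k)^{(k-2j)\log 2}$. The idea is to write the ratio as an explicit product of $k$ simple rational factors and then bound them in two natural groups.

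First I would use the identity ${n\choose k}{k\choose j} = {n\choose j}{n-j\choose k-j}$ (both sides count pairs consisting of a $k$-subset of $[n]$ and a $j$-subset of it) to rewrite
\begin{equation*}
\frac{{n-j\choose k-j}^2}{{n\choose k}} = {n-j\choose k-j}\cdot\frac{{k\choose j}}{{n\choose j}} = \prod_{i=0}^{j-1}\frac{k-i}{n-i}\cdot\prod_{m=j}^{k-1}\frac{n-m}{k-m}.
\end{equation*}
Next I would pair each $i\in\{0,\ldots,j-1\}$ in the first product with $m=j+i$ in the second, consolidating them into the $j$ factors $(k-i)(n-j-i)/((n-i)(k-j-i))$ and leaving $k-2j$ unpaired factors $(n-m)/(k-m)$ with $m\in\{2j,2j+1,\ldots,k-1\}$. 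The assumption $k\geq 2j$ is precisely what permits this reorganization.

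To finish, two direct computations: each paired factor is at least $1$, since $(k-i)(n-j-i)-(n-i)(k-j-i)=j(n-k)\geq 0$; and each unpaired factor is at least $n/k$, since $k(n-m)-n(k-m)=m(n-k)\geq 0$. Multiplying, the full product is at least $(n/k)^{k-2j}$, as claimed. I do not anticipate a real obstacle here; the only nontrivial step is spotting the pairing, after which everything reduces to elementary arithmetic.
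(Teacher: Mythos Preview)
Your proof is correct and in fact establishes the stronger bound $\binom{n-j}{k-j}^2\binom{n}{k}^{-1}\geq (n/k)^{k-2j}$, from which the lemma follows since $\log 2<1$. The paper takes a different route: it writes the left side as $f(n)/f(k)$ with $f(x)=(x-j)_{k-j}/(x)_j$, telescopes this into $\prod_{x=k}^{n-1} f(x+1)/f(x)$, proves $f(x+1)/f(x)\geq 1+(k-2j)/x$, and then invokes $1+t\geq 2^t$ for $t\in[0,1]$ together with $\sum_{x=k}^{n-1}1/x\geq\log(n/k)$ to land at $(n/k)^{\log(2)(k-2j)}$. Your decomposition into the $j$ ``paired'' factors $(k-i)(n-j-i)/((n-i)(k-j-i))\geq 1$ and the $k-2j$ ``unpaired'' factors $(n-m)/(k-m)\geq n/k$ avoids both the exponential inequality and the harmonic-sum estimate, which is exactly why the spurious $\log 2$ disappears. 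The paper's telescoping viewpoint is perhaps more systematic (one ratio to analyze rather than two families of factors), but your argument is more elementary and sharper; for the application in Lemma~\ref{lem:q-binomial-inequality-2} the extra strength is not needed, though it would let one replace $\log 2$ by $1$ throughout part~(a) of that proof.
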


    \begin{proof}
        Let $f(x) = (x-j)_{k-j} (x)_j^{-1}$, where $(x)_j$ is the falling factorial $x(x-1) \cdots (x-j+1)$. Then ${n-j \choose k-j}^2 {n \choose k}^{-1} = f(n)/f(k)$, and we compute
        \begin{equation*}
            \frac{f(x+1)}{f(x)} = \frac{(x-j+1)^2}{(x+1)(x-k+1)} \geq 1 + \frac{k-2j}{x} \quad \text{for $x > 0$ and $k \geq 2j$}.
        \end{equation*}
        Now using the inequality $1+x \geq 2^x$ for $0 \leq x \leq 1$,
        \begin{align*}
            \frac{f(n)}{f(k)} &= \prod_{x=k}^{n-1} \frac{f(x+1)}{f(x)} \geq \prod_{x=k}^{n-1} \left(1 + \frac{k-2j}{x}\right)\geq \prod_{x=k}^{n-1} \exp\left(\log(2) \frac{k-2j}{x}\right)\\
            & = \exp\left(\log(2)(k-2j) \sum_{x=k}^{n-1} \frac{1}{x}\right) \geq  \exp\left(\log(2)(k-2j) \log \frac{n}{k}\right) =  \left(\frac{n}{k}\right)^{\log (2)(k-2j)}.
        \end{align*}
    \end{proof}

\begin{lemma} \label{lem:q-binomial-inequality-2} For $n \geq 0$ and $2 < k < n-2$, it holds that $\maxcoeff{\smallqbinom{n}{k}} \leq \frac{1}{n}{n \choose k}$. \end{lemma}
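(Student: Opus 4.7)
The plan is to couple the residue-class estimate of Lemma~\ref{lem:q-binomial-inequality-1} with the unimodality of the coefficient sequence of $\smallqbinom{n}{k}$ (Sylvester's theorem). Write $B = \binom{n}{k}$, $D = k(n-k)$, and $a_r = [q^r]\smallqbinom{n}{k}$; by Sylvester's theorem together with the symmetry $a_r = a_{D-r}$, the maximum $M := \maxcoeff{\smallqbinom{n}{k}}$ equals $a_m$ for $m = \lfloor D/2 \rfloor$. Since $a_m$ is one of the nonnegative summands in its residue class modulo $n$, Lemma~\ref{lem:q-binomial-inequality-1} yields
\[
M \;\leq\; \sum_{r \equiv m \pmod{n}} a_r \;\leq\; \frac{B}{n} + \sqrt{\frac{B}{n}}.
\]
This differs from the target $M \leq B/n$ by only a square-root error, so the entire remaining problem is to improve the trivial lower bound $\sum_{r \equiv m} a_r \geq a_m$ to
\[
\sum_{\substack{r \equiv m \pmod{n}\\ r \neq m}} a_r \;\geq\; \sqrt{\frac{B}{n}}.
\]
Subtracting this inequality from the display above gives the theorem.

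To obtain the required tail lower bound, I would focus on the two nearest neighbours $a_{m-n}$ and $a_{m+n}$ in the residue class of $m$. Since $a_{m+n}$ enumerates partitions of $m+n$ that fit in the $k \times (n-k)$ box, one should be able to construct many such partitions by prescribing $j$ of the rows rigidly (so as to put in the right amount of ``mass'' beyond $m$) and freely choosing the remaining $k-j$ rows from $[n-j]$, yielding a combinatorial injection and an inequality of the shape $a_{m+n} \geq \binom{n-j}{k-j}$ for an appropriate $j$ satisfying $1 \leq 2j < k$. Feeding this into Lemma~\ref{lem:binomial-inequality} gives $\binom{n-j}{k-j} \geq \sqrt{B}\,(n/k)^{(\log 2)(k-2j)/2}$, which exceeds $\sqrt{B/n}$ as soon as $k \geq 3$ and $n$ is moderately large, since the exponent of $n/k$ is strictly positive.

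I expect the principal obstacle to be engineering the combinatorial inequality $a_{m+n} \geq \binom{n-j}{k-j}$: the parameter $j$ must be tuned so that both $m+n$ is the correct total size for the constructed partitions and $k - 2j \geq 1$, and it is precisely the second condition that forces $k > 2$ and makes the boundary values $k \in \{3, n-3\}$ delicate. As with Lemma~\ref{lem:q-binomial-inequality-1}, the estimates above only have bite for $n$ sufficiently large, so the remaining finitely many small cases would be verified by computer, mirroring the pattern at the end of that lemma.
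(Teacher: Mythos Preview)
Your high-level plan coincides with the paper's: use unimodality and palindromicity to identify $M=a_m$ with $m=\lfloor D/2\rfloor$, apply Lemma~\ref{lem:q-binomial-inequality-1} to the residue class of $m$, and reduce the problem to a tail lower bound
\[
\sum_{\substack{r\equiv m\!\!\pmod n\\ r\neq m}} a_r \;\ge\;\sqrt{B/n}.
\]
The gap is in how you propose to obtain this tail bound. The inequality $a_{m+n}\ge\binom{n-j}{k-j}$ with $k-2j\ge 1$ is simply false in general. For instance, with $n=10$, $k=5$ one has $m=12$, $a_{m+n}=a_{22}=a_{3}=3$, while $\binom{n-1}{k-1}=126$ and $\binom{n-2}{k-2}=56$; no admissible $j$ works. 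The reason is structural: your sketch ``fix $j$ rows rigidly and freely choose the remaining $k-j$ rows'' produces $\binom{n-j}{k-j}$ partitions of \emph{varying} total size, not partitions of the single integer $m+n$, so it cannot give an injection into the set counted by $a_{m+n}$. Indeed, Lemma~\ref{lem:binomial-inequality} itself shows $\binom{n-j}{k-j}\ge\sqrt{B}$ whenever $k>2j$, which for small $k$ already exceeds the trivial upper bound $a_{m+n}\le M$ that you are trying to prove is about $B/n$.

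The paper does not try to lower-bound a single neighbouring coefficient by anything as large as $\binom{n-j}{k-j}$. Instead it bounds the \emph{entire} tail sum, in two regimes. For large $k$ it compares the residues modulo $n$ to residues modulo $n-8$, uses the coefficientwise inequality $\smallqbinom{n}{k}\ge\smallqbinom{n-8}{k-8}$ to pass to the smaller $q$-binomial, and then applies Lemma~\ref{lem:q-binomial-inequality-1} a \emph{second} time to get
\[
\sum_{\substack{r\equiv m\!\!\pmod n\\ r\neq m}} a_r \;\ge\; \tfrac12\!\left(\tfrac{1}{n-8}\binom{n-8}{k-8}-\sqrt{\tfrac{1}{n-8}\binom{n-8}{k-8}}\right),
\]
after which Lemma~\ref{lem:binomial-inequality} with $j=8$ (so $k-2j=k-16$) finishes the job once $k\ge 20$. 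The shift by $8$ is chosen precisely so that $m-2(n-8)$ lands past the centre of $\smallqbinom{n-8}{k-8}$. For each fixed small $k$ the paper uses unimodality to replace the awkward coefficient $a_{m-n}$ by the tractable one $a_{n-k}$, the number of partitions of $n-k$ into at most $k$ parts, and bounds that below by $\tfrac{1}{k!}\binom{n-1}{k-1}$; this covers $5\le k\le 20$ for large $n$. The cases $k\in\{3,4\}$ and the finitely many remaining small $n$ are then handled directly. So while your reduction is correct, the mechanism you propose for the tail estimate needs to be replaced by one of these two devices.
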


\begin{proof} Let $\smallqbinom{n}{k} = \sum_{j} a_j q^j$ and set $m_{n,k} = \lfloor \frac{k(n-k)}{2} \rfloor$. The polynomial $\smallqbinom{n}{k}$ has degree $k(n-k)$, and Sylvester \cite{sylvester-unimodality} proved that it is \emph{unimodal} in the sense that $1 = a_0 \leq a_1 \leq \cdots \leq a_{m_{n,k}} \geq a_{m_{n,k}+1} \geq \cdots \geq a_{k(n-k)} = 1$, so that $\maxcoeff{\smallqbinom{n}{k}} = a_{m_{n,k}}$. It is also a palindromic polynomial: $a_i = a_{k(n-k)-i}$ for any $i$.

    Lemma~\ref{lem:q-binomial-inequality-1} shows that $\sum_{r \equiv m_{n,k} \bmod{n}} a_{r} \leq \tfrac{1}{n}{n \choose k} + \sqrt{\tfrac{1}{n} {n\choose k}}$, so it would suffice to find $r \equiv m_{n,k} \bmod{n}$ but $r \neq m_{n,k}$ such that $a_r$ (or a sum of such $a_r$) exceeds $\sqrt{\tfrac{1}{n}{n \choose k}}$. We do this in two ways: one which works for all but finitely many $k$, and then another which works for any fixed $k$ and all but finitely many $n$.
    \begin{enumerate}[(a)]
        \item Assume $n \geq 16$. Then $2(n-8) \geq n$, so any interval $[a,a+n)$ can contain at most two multiples of $n-8$. Unimodality then implies the first inequality in
    \begin{equation*}
        \sum_{i=2}^{\infty} [q^{m_{k,n}-i(n-8)}]\qbinom{n}{k} \leq \sum_{i=1}^{\infty} 2[q^{m_{k,n}-in}]\qbinom{n}{k} \leq \sum_{\substack{r \equiv m_{n,k} \bmod{n} \\ r \neq m_{n,k}}} [q^r]\qbinom{n}{k},
    \end{equation*}
    while the second follows from $a_{m_{n,k}-i} \leq a_{m_{n,k}+i}$ for $i \geq 0$, a consequence of unimodality and palindromicity.

    The partition interpretation of $\smallqbinom{n}{k}$ makes clear that $\smallqbinom{n}{k} - \smallqbinom{n-1}{k-1}$ has nonnegative coefficients, so 
    \begin{equation*}
        \sum_{i=2}^{\infty} [q^{m_{k,n}-i(n-8)}]\qbinom{n}{k} \geq  \sum_{i=2}^{\infty} [q^{m_{k,n}-i(n-8)}]\qbinom{n-8}{k-8}.
    \end{equation*}
    We can assume without loss of generality that $k \leq \tfrac{n}{2}$, in which case $m_{k,n} - 2(n-8) > m_{k-8,n-8}$ (this explains the appearance of $8$: the inequality would be false with $8$ replaced by $7$). The integers $m_{k,n} + j(n-8)$ for varying $j$ therefore look like
    \begin{center}
        \begin{tikzpicture}
            \draw  (-5,0) -- (8,0);
            \draw (0,-.2) -- (0,.2);
            \node[above] at (0,.2) {$\scriptstyle (k-8)(n-k)/2$};
            \node[below] at (8,-0.1) {$\cdots$};
            \filldraw (7,0) circle (2pt) node[below] {$2$};
            \node[below] at (6,-0.1) {$\cdots$};
            \filldraw (4.5,0) circle (2pt) node[below] {$p-2$};
            \filldraw (2.5,0) circle (2pt) node[below] {$p-1$};
            \filldraw (0.5,0) circle (2pt) node[below] {$p$};
            \filldraw (-1.5,0) circle (2pt) node[below] {$p+1$};
            \filldraw (-3.5,0) circle (2pt) node[below] {$p+2$};
            \node[below] at (-4.5,-0.1) {$\cdots$};
        \end{tikzpicture}
    \end{center}
    or like 
    \begin{center}
        \begin{tikzpicture}
            \draw  (-5,0) -- (8,0);
            \draw (-1,-.2) -- (-1,.2);
            \node[above] at (-1,.2) {$\scriptstyle (k-8)(n-k)/2$};
            \node[below] at (8,-0.1) {$\cdots$};
            \filldraw (7,0) circle (2pt) node[below] {$2$};
            \node[below] at (6,-0.1) {$\cdots$};
            \filldraw (4.5,0) circle (2pt) node[below] {$p-2$};
            \filldraw (2.5,0) circle (2pt) node[below] {$p-1$};
            \filldraw (0.5,0) circle (2pt) node[below] {$p$};
            \filldraw (-1.5,0) circle (2pt) node[below] {$p+1$};
            \filldraw (-3.5,0) circle (2pt) node[below] {$p+2$};
            \node[below] at (-4.5,-0.1) {$\cdots$};
        \end{tikzpicture}
    \end{center}
    where $j$ is indicated below the number line, and the two cases are separated based on which of $m_{k,n} - p(n-8)$ and $m_{k,n} - (p+1)(n-8)$ is closer to $\frac{(k-8)(n-k)}{2}$, the center of the list of coefficients of $\smallqbinom{n-8}{k-8}$. Pair up the integers as $(p+1, p-1), (p+2, p-2), \ldots$ in case 1 (leaving $p$ unpaired) or $(p+1,p), (p+2,p-1), \ldots$ in case 2. Then each pair $(a,b)$ has the property that $a$ is weakly closer to $\frac{(k-8)(n-k)}{2}$ than $b$ is, and so unimodicity and palindromicity says $[q^{m_{k,n}-a(n-8)}]\smallqbinom{n-8}{k-8} \geq [q^{m_{k,n}-b(n-8)}]\smallqbinom{n-8}{k-8}$. Because $p \geq 2$, this implies
    \begin{equation*}
        \sum_{i=2}^{\infty} [q^{m_{k,n}-i(n-8)}]\qbinom{n-8}{k-8} \geq \frac{1}{2}\sum_{r \equiv m_{k,n} \bmod{n-8}} [q^r]\qbinom{n-8}{k-8}.
    \end{equation*}

    Applying Lemma~\ref{lem:q-binomial-inequality-1} to this last expression and concatenating the inequalities in the last three displayed equations,
    \begin{equation*}
        \sum_{\substack{r \equiv m_{n,k} \bmod{n} \\ r \neq m_{n,k}}} [q^r]\qbinom{n}{k} \geq \frac{1}{2}\left(\frac{1}{n-8}{n-8 \choose k-8} - \sqrt{\frac{1}{n-8}{n-8 \choose k-8}}\right).
    \end{equation*}

    We must therefore show that $\sqrt{\frac{1}{n}{n\choose k}} \leq \frac{1}{2}\left(\frac{1}{n-8}{n-8 \choose k-8} - \sqrt{\frac{1}{n-8}{n-8 \choose k-8}}\right)$. As 
    \begin{equation*}
        \sqrt{\frac{1}{n}{n\choose k}} + \frac{1}{2}\sqrt{\frac{1}{n-8}{n-8\choose k-8}} \leq \frac{3}{2}\sqrt{\frac{1}{n-8}{n \choose k}},
    \end{equation*}
    it would suffice to show $\frac{3}{2}\sqrt{\frac{1}{n-8}{n \choose k}} \leq \frac{1}{2}\frac{1}{n-8}{n-8 \choose k-8}$, i.e. $9(n-8){n \choose k} \leq {n-8 \choose k-8}^2$.

    By Lemma~\ref{lem:binomial-inequality}, if $k \geq 16$ then
    \begin{equation*}
        {n-8 \choose k-8}^2 {n \choose k}^{-1} \geq \left(\frac{n}{k}\right)^{\log(2)(k-16)}.
    \end{equation*}
    As a function of $k$, $\left(\frac{n}{k}\right)^{\log(2)(k-16)}$ is increasing near $0$ and decreasing near $\infty$ with exactly one critical point in between. Thus, its minimum over any interval $a \leq k \leq b$ occurs at either $k = a$ or $k = b$. In particular, if we assume $k \geq 20$, then 
    \begin{equation*}
        {n-8 \choose k-8}^2 {n \choose k}^{-1} \geq \min\left( \left(\frac{n}{20}\right)^{4\log 2}, 2^{\log(2)(n/2-16)} \right).
    \end{equation*}
    The right side exceeds $9(n-8)$ for $n \geq 370$.

    \item Suppose $k \geq 5$ and $n \geq 16$. Then $m_{n,k} - (n-k) \geq n$, so the interval $[n-k, m_{n,k})$ contains some number $h$ with $h \equiv m_{n,k} \bmod{n}$. Unimodicity implies $[q^h]\smallqbinom{n}{k} \geq [q^{n-k}]{n \choose k}$, so it suffices to show that $[q^{n-k}]{n \choose k} \geq \sqrt{\frac{1}{n} {n \choose k}}$. Observe that $[q^{n-k}]\smallqbinom{n}{k}$ is simply the number of partitions of $n-k$ into $k$ parts (some of which may be $0$). Any sequence in $\{0,1,2,\ldots\}^k$ with sum $n-k$ may be obtained by permuting the parts of such a partition, and there are ${n-1 \choose k-1}$ such sequences, so $[q^{n-k}]\smallqbinom{n}{k} \geq \frac{1}{k!} {n-1 \choose k-1}$.
    
    A little rearrangement shows that $\frac{1}{k!} {n-1 \choose k-1} \geq \sqrt{\frac{1}{n} {n \choose k}}$ if and only if $(n-1)_{k-1} \geq k(k-1)!^3$. For any fixed $k$, one can simply compute the minimal $n_0 \geq k$ with $(n_0-1)_{k-1} \geq k(k-1)!^3$, and then the inequality $(n-1)_{k-1} \geq k(k-1)!^3$ will hold for all $n \geq n_0$, because $(n-1)_{k-1}$ is an increasing function of $n$ for $n \geq k$. Carrying out this computation, one concludes that the desired inequality holds for $5 \leq k \leq 20$ and $n \geq 594$.

    \item The missing cases are when $n < 594$ or $k \in \{3,4\}$. One has $m_{n,4} = 2(n-4) \equiv n-8 \pmod{n}$, so if $n \geq 8$ and $k = 4$ then one can use the approach of (b), replacing $n-k$ with $n-8$. Similarly, $m_{n,3} \equiv \lfloor \frac{n-9}{2} \rfloor \pmod{n}$. In this way one deduces the lemma when $k = 3$ and $n \geq 31$, or $k = 4$ and $n \geq 19$. We have checked the lemma for $n < 594$ by computer: some work can be saved here by observing that \eqref{eq:error} in the proof of Lemma~\ref{lem:q-binomial-inequality-1} implies the lemma immediately when $\gcd(n,k) = 1$, eliminating about 60\% of the pairs that would otherwise have to be checked.
\end{enumerate}
    
\end{proof}

We now restate and prove Theorem~\ref{thm:main}. Recall that $\one = (1,1,\ldots,1) \in \RR^n$.

\begin{thm*}[Theorem \ref{thm:main}] For $n \geq 3$, the maximum of $\pair(v,w) = \#\{\sigma \in S_n : w \cdot \sigma v = 0\}$ over all $v,w \in \RR^n$ where $v$ has distinct coordinates and $v \cdot \one \neq 0$ is
    \begin{equation*}
        2\lfloor \tfrac{n}{2} \rfloor (n-2)! = 
        \begin{cases}
            (n-1)! & \text{for $n$ odd}\\
            n(n-2)! & \text{for $n$ even}
        \end{cases}.
    \end{equation*}
\end{thm*}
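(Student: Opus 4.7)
The plan is to chain together the results already established. I first use the $S_n$-invariance $\pair(v,w) = \pair(\sigma v, \tau w)$ to reduce to the case where $v$ is strictly increasing and $w$ is weakly increasing, and I set $\alpha = \comp(w)$. Theorem~\ref{thm:q-binomial-reduction} then supplies $\pair(v,w) \leq \alpha! \maxcoeff{\smallqbinom{n}{\alpha}}$.

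Next I split on $\ell(\alpha)$. If $\ell(\alpha) = 1$, then $w$ is a nonzero scalar multiple of $\one$ (only $w$ defining a hyperplane is of interest), and the hypothesis $v \cdot \one \neq 0$ forces $w \cdot \sigma v \neq 0$ for every $\sigma \in S_n$, so $\pair(v,w) = 0$. If $\ell(\alpha) \geq 2$, I choose any coarsening $\alpha' = (k, n-k)$ of $\alpha$ with $\alpha' \prec \alpha$ and $1 \leq k \leq n-1$; Lemma~\ref{lem:refinement} then gives $\alpha! \maxcoeff{\smallqbinom{n}{\alpha}} \leq k!(n-k)! \maxcoeff{\smallqbinom{n}{k}}$.

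It therefore suffices to show $k!(n-k)! \maxcoeff{\smallqbinom{n}{k}} \leq 2\lfloor n/2\rfloor(n-2)!$ for $1 \leq k \leq n-1$, and by the symmetry $\smallqbinom{n}{k} = \smallqbinom{n}{n-k}$ I may further assume $k \leq n/2$. The two boundary cases I handle by hand: when $k=1$, $\maxcoeff{\smallqbinom{n}{1}} = 1$ yields $(n-1)!$, which meets the bound because $n-1 \leq 2\lfloor n/2\rfloor$ (with equality for odd $n$); when $k=2$, the partition interpretation gives $\maxcoeff{\smallqbinom{n}{2}} = \lfloor n/2\rfloor$, and the product $2!(n-2)!\lfloor n/2\rfloor$ hits the bound exactly. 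For the remaining $2 < k \leq n/2$ (so in particular $2 < k < n-2$ and $n \geq 6$), Lemma~\ref{lem:q-binomial-inequality-2} provides $\maxcoeff{\smallqbinom{n}{k}} \leq \tfrac{1}{n}\binom{n}{k}$, and therefore $k!(n-k)! \maxcoeff{\smallqbinom{n}{k}} \leq (n-1)! \leq 2\lfloor n/2\rfloor(n-2)!$.

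Attainment of the maximum is furnished by the explicit vectors of Huang, McKinnon, and Satriano in \cite{Sn-orbit-hyperplane}. With the preparatory results in hand, no step in this assembly is truly difficult; the one place requiring care is noting that $k = 2$ (and symmetrically $k = n-2$) is exactly where the bound is saturated, which is why Lemma~\ref{lem:q-binomial-inequality-2} is stated for $2 < k < n-2$ and those two endpoint values must be verified by the direct partition count rather than by invoking the general estimate.
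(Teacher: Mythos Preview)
Your proof is correct and follows essentially the same route as the paper's: reduce via Theorem~\ref{thm:q-binomial-reduction} and Lemma~\ref{lem:refinement} to bounding $k!(n-k)!\maxcoeff{\smallqbinom{n}{k}}$, dispose of $k\in\{1,2,n-2,n-1\}$ by direct computation, and invoke Lemma~\ref{lem:q-binomial-inequality-2} for the remaining $2<k<n-2$. The only cosmetic differences are that you first pass to $k\le n/2$ by symmetry (the paper treats all $k$ at once) and that you explicitly note the exclusion of $w=0$, which the paper leaves implicit.
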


\begin{proof}
   The only vectors $w$ with $\comp(w) = (n)$ are those in $\RR \one$, so if $\comp(w) = (n)$ then $v \cdot \one \neq 0$ implies $\pair(v,w) = 0$. We can therefore assume $\comp(w)$ has at least two parts. Now,
   \begin{align*}
    \pair(v,w) &\leq \comp(w)! \maxcoeff{\qbinom{n}{\comp(w)}}  \quad \text{(by Theorem~\ref{thm:q-binomial-reduction})}\\
    &\leq \max_{\substack{\alpha \vDash n \\ \ell(\alpha) \geq 2}} \alpha! \maxcoeff{\qbinom{n}{\alpha}}\\
    &= \max_{0 < k < n} k!(n-k)! \maxcoeff{\qbinom{n}{k}}   \quad \text{(by Lemma~\ref{lem:refinement})}.
   \end{align*}By Lemma~\ref{lem:q-binomial-inequality-2}, if $2 < k < n-2$ then
   \begin{equation*}
    k!(n-k)! \maxcoeff{\qbinom{n}{k}} \leq k!(n-k)! \frac{1}{n} {n \choose k} = (n-1)!. 
   \end{equation*}
   Using the interpretation of $[q^r] \smallqbinom{n}{k}$ as the number of partitions of $r$ into at most $k$ parts of size at most $n-k$, we see that 
   \begin{align*}
    &1!(n-1)!\maxcoeff{\qbinom{n}{1}} = (n-1)!\\
    &2!(n-2)!\maxcoeff{\qbinom{n}{2}} = 2(n-2)!\lfloor \tfrac{n}{2} \rfloor = \begin{cases}
        (n-1)! & \text{for $n$ odd}\\
        n(n-2)! & \text{for $n$ even}.
    \end{cases}
\end{align*}
\cite[Examples 1.3 and 1.4]{Sn-orbit-hyperplane} provide specific vectors $v$ and $w$ for which the upper bound in the theorem is attained.
\end{proof}

\subsection{Final remarks}
A sequence $a_1, a_2, \ldots, a_n$ is called \emph{log-concave at $k$} if $a_k^2 \geq a_{k-1} a_{k+1}$, and just \emph{log-concave} if it is log-concave at every $1 < k < n$. For instance, the binomial coefficients $a_k = {n \choose k}$ for a fixed $n$ are log-concave. At one point in the development of this paper we thought to use the log-concavity of the sequence of coefficients of each fixed $\smallqbinom{n}{k}$---but as pointed out in \cite{stanley-log-concave}, this property does not actually hold. However, computations up to $n = 430$ suggest the following conjecture.

\begin{conjecture*}
    For any fixed $n \geq 45$ and $13 \leq k \leq n-13$, the sequence of coefficients $[q^r]\smallqbinom{n}{k}$ is log-concave at each $25 < r < k(n-k)-25$.
\end{conjecture*}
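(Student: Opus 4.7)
I would approach this via effective asymptotics for the coefficients $a_r := [q^r]\smallqbinom{n}{k}$, supplemented by a combinatorial shortcut where it applies and a computer check of base cases. The standard tool is the saddle-point method applied to $a_r = (2\pi i)^{-1} \oint \smallqbinom{n}{k}(q)\, q^{-r-1}\,dq$, whose leading output is Gaussian:
\[
a_r \;=\; G_r\bigl(1 + E_{n,k}(r)\bigr), \qquad G_r \;:=\; \frac{\binom{n}{k}}{\sigma_{n,k}\sqrt{2\pi}}\,\exp\!\Bigl(-\tfrac{(r-\mu_{n,k})^2}{2\sigma_{n,k}^2}\Bigr),
\]
where $\mu_{n,k} = k(n-k)/2$ and $\sigma_{n,k}^2 = k(n-k)(n+1)/12$ are the mean and variance of the inversion statistic on binary words with $k$ ones and $n-k$ zeros (the variance formula can be checked from $(\log \smallqbinom{n}{k})''(1) = k(n-k)(n-5)/12$). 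The entire problem reduces to effective control of the error term $E_{n,k}(r)$.

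Since $G_r$ is strictly log-concave in $r$ with constant second difference $\Delta^2 \log G_r = -\sigma_{n,k}^{-2}$, the log-concavity inequality $a_r^2 \ge a_{r-1}a_{r+1}$ is equivalent to the discrete bound
\[
\Delta^2 \log\bigl(1+E_{n,k}(r)\bigr) \;\le\; \sigma_{n,k}^{-2}.
\]
When $\min(k, n-k) > 25$, the sub-range $25 < r < \min(k, n-k)$ can be handled without any asymptotics: the box constraint is vacuous there and $a_r$ equals the ordinary partition number $p(r)$, so the theorem of DeSalvo and Pak that $p(r)^2 \ge p(r-1)p(r+1)$ for $r \ge 25$ applies directly, and palindromicity gives the mirror range near $k(n-k)$. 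The interior of $[\min(k,n-k),\, k(n-k)-\min(k,n-k)]$ must be attacked with the asymptotics.

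\textbf{Main obstacle.} The hard part is producing an effective Edgeworth-type expansion for $a_r$ with uniform error bounds strong enough that $\lvert \Delta^2 \log(1+E_{n,k}(r))\rvert < \sigma_{n,k}^{-2} \asymp 1/(k(n-k)n)$ throughout the conjecture's range. A textbook saddle-point bound gives only $E_{n,k}(r) = O(1/n)$, nowhere near adequate; one needs at minimum the first Edgeworth correction from the third and fourth cumulants of the inversion statistic, with all implicit constants tracked. The situation is worst when $r$ is near $25$ or $k(n-k)-25$, where $(r-\mu_{n,k})/\sigma_{n,k}$ can be of order $\sqrt{n}$ --- that is, deep in the Gaussian tail, where $G_r$ is exponentially smaller than $a_r$ itself. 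In the tight regimes such as $k=13$, where DeSalvo--Pak offers no help, this tail must be handled analytically for all $r$ down to $25$. Once the effective bounds are in place they will yield an explicit threshold $n_0$, and the remaining cases $45 \le n < n_0$ can be dispatched by extending the computer check that the authors report having carried out to $n = 430$.
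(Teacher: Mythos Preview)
The paper does not prove this statement: it is explicitly labeled a conjecture in the final remarks, and the only evidence offered is computer verification up to $n=430$. There is therefore no proof in the paper against which to compare your proposal.

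What you have written is a plausible research outline, and the ingredients you invoke (the Gaussian local limit for the inversion statistic on binary words, DeSalvo--Pak log-concavity of $p(r)$ for the extreme tail, Edgeworth corrections in the bulk) are natural. But it is not a proof, and you yourself correctly isolate the unresolved obstacle. Two points deserve emphasis. First, the expansion $a_r=G_r(1+E_{n,k}(r))$ with $E_{n,k}$ small is only meaningful in the bulk, where $|r-\mu_{n,k}|/\sigma_{n,k}$ stays bounded; for $r$ near $26$ and $k$ proportional to $n$ one is $\Theta(\sqrt{n})$ standard deviations into the tail, and $G_r$ and $a_r$ differ by an exponential factor (in fact your claim that $G_r$ is exponentially smaller than $a_r$ has the sign wrong in the regime $k\sim n/2$, where $G_r$ overshoots the bounded value $a_r=p(r)$), so neither the Gaussian nor any fixed-order Edgeworth correction centered at $q=1$ is usable---a moving-saddle large-deviation analysis with effective constants would be needed. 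Second, the DeSalvo--Pak shortcut only covers $25<r<\min(k,n-k)$, hence contributes nothing when $13\le k\le 25$, and in those cases the entire range down to $r=26$ must come from the analytic estimate. Until such estimates are actually carried out with explicit constants and an explicit threshold $n_0$, this remains a sketch of a possible attack on an open problem rather than a proof.
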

DeSalvo and Pak \cite{desalvo-pak} have shown that the sequence of partition numbers $p(r)$ counting all partitions of $r$ is log-concave at all $r > 25$.


\bibliographystyle{plain}
\bibliography{../../bib/algcomb}

\end{document}